\documentclass[12pt,a4paper]{article}         

%###############################################################################
 
\usepackage[utf8]{inputenc}                  
\usepackage[T1]{fontenc}                     

\usepackage{amsthm,amsmath,amssymb}
\usepackage{mathrsfs}                        
\usepackage{dsfont}                          

\usepackage{hyperref}                        

%###############################################################################

\newcommand{\ud}{\mathrm{d}}
\newcommand{\CR}{\mathds{R}}
\newcommand{\CZ}{\mathds{Z}}
\newcommand{\CC}{\mathds{C}}

%#############################################

\theoremstyle{plain} 
\newtheorem{theorem}{Theorem}[section]
\newtheorem{lemma}{Lemma}[section]
\newtheorem{proposition}{Proposition}[section]

%###############################################################################

\begin{document}

%###############################################################################

\title{Gauss--Bonnet theorem for compact and orientable surfaces: a proof without using triangulations}
\author{Romero Solha\footnote{Departamento de Matem\'{a}tica, Pontif\'{i}cia Universidade Cat\'{o}lica do Rio de Janeiro. Postal address: Rua Marqu\^{e}s de São Vicente, 225 - Edif\'{i}cio Cardeal Leme, sala 862, G\'{a}vea, Rio de Janeiro - RJ, Brazil (postal code 22451-900). Email: romerosolha@gmail.com}}
\date{\today}

\maketitle

%###############################################################################

\begin{abstract}
The aim of this note is to provide an intrinsic proof of the Gauss--Bonnet theorem without invoking triangulations, which is achieved by exploiting complex structures. 
\end{abstract}

%###############################################################################

\section{Introduction}

\hspace{1.5em}Given an orientable two dimensional manifold $\Sigma$, out of any riemannian structure $\boldsymbol{\mathrm{g}}$ (e.g. it might be one induced by the euclidean metric, supposing that $\Sigma$ is embedded in some euclidean space) one can construct an area form $\star_{\boldsymbol{\mathrm{g}}}1\in\Omega^2(\Sigma;\CR)$ and its sectional curvature $K_{\boldsymbol{\mathrm{g}}}\in C^\infty(\Sigma;\CR)$. Remarkably, the integral 
\begin{equation*}
\int_\Sigma K_{\boldsymbol{\mathrm{g}}}\star_{\boldsymbol{\mathrm{g}}}1
\end{equation*}is a constant depending only on the topology of $\Sigma$ (assuming it to be compact and without boundary), as it was already observed by Gauss and Bonnet; i.e. if
$\boldsymbol{\mathrm{g}}^\prime$ is any riemannian structure defined on any manifold $\Sigma^\prime$ diffeomorphic (or homeomorphic) to $\Sigma$, then
\begin{equation*}
\int_\Sigma K_{\boldsymbol{\mathrm{g}}}\star_{\boldsymbol{\mathrm{g}}}1=\int_{\Sigma^\prime}K_{\boldsymbol{\mathrm{g}}^\prime}\star_{\boldsymbol{\mathrm{g}}^\prime}1 \ .
\end{equation*}

A proof of this theorem, known as the Gauss--Bonnet theorem, can be achieved by means of a triangulation on $\Sigma$. Indeed, such a proof can be found in standard books on the subject (eg. do Carmo's book \cite{doCarmo}), and its intricacies are related to proving the independence on the choice of a triangulation and the existence of such structure.

Here in this note, a proof of the Gauss--Bonnet theorem is presented using complex structures on $\Sigma$ without using triangulations. Instead of focussing on the topology of $\Sigma$ this proof exploits the complex geometry of its tangent bundle $T\Sigma$, which is to be understood as a complex line bundle; hence, instead of referring to Euler characteristics or genera, one refers to Chern numbers to represent the ``topological content'' of the Gauss--Bonnet formula.

Complex structures have been used in proofs of the Gauss--Bonnet \linebreak theorem by Jost \cite{Jost} and Taubes \cite{Taubes}; however, Jost invokes triangulations, whereas Taube exploits the embedding of the surface in the three dimensional euclidian space ---contrasting with the intrinsic proof of this note.

Starting with $(\Sigma,\boldsymbol{\mathrm{g}})$, the riemannian structure grants not only a connexion on $T\Sigma$, the Levi-Civita connexion $\nabla^{\boldsymbol{\mathrm{g}}}$, but also a complex structure $\boldsymbol{j}_{\boldsymbol{\mathrm{g}}}$ (proposition \ref{ACsurface}) and a hermitian inner product $\boldsymbol{h}_{\boldsymbol{\mathrm{g}}}:=\boldsymbol{\mathrm{g}}+\sqrt{-1}\cdot \star_{\boldsymbol{\mathrm{g}}}1$ compatible with the Levi-Civita connexion, turning $(T\Sigma,\boldsymbol{j}_{\boldsymbol{\mathrm{g}}},\boldsymbol{h}_{\boldsymbol{\mathrm{g}}},\nabla^{\boldsymbol{\mathrm{g}}})$ into a hermitian line bundle with a hermitian connexion. 

The complex structure, at some point $p\in\Sigma$, takes a tangent vector $v\in T_p\Sigma$ to another tangent vector $\boldsymbol{j}_{\boldsymbol{\mathrm{g}}}{|}_p(v)\in T_p\Sigma$ that is orthogonal to $v$ (with respect to the riemannian structure $\boldsymbol{\mathrm{g}}$) and forms with it a positively oriented basis $\{v,\boldsymbol{j}_{\boldsymbol{\mathrm{g}}}{|}_p(v)\}\subset T_p\Sigma$ (with respect to the orientation induced by $\star_{\boldsymbol{\mathrm{g}}}1$). In other words, it rotates tangent vectors in a orthogonal and orientable fashion, mimicking the rotation induced by multiplication by $\sqrt{-1}$ on the real plane.

When $\Sigma$ is a subriemannian manifold of the three dimensional euclidean space, the complex structure (given by the induced metric) at a point applied to a tangent vector is simply the cross product between the normal vector at the particular point and this tangent vector (with both vectors understood as elements of the three dimensional euclidean space).

It so happens that (lemma \ref{GBformula})
\begin{equation*}
\sqrt{-1}\cdot \mathrm{curv}(\nabla^{\boldsymbol{\mathrm{g}}})=K_{\boldsymbol{\mathrm{g}}}\star_{\boldsymbol{\mathrm{g}}}1 \ , 
\end{equation*}
and the first Chern number, defined by 
\begin{equation*}
\frac{1}{2\pi}\int_\Sigma\sqrt{-1}\cdot \mathrm{curv}(\nabla^{\boldsymbol{\mathrm{g}}}) \ , 
\end{equation*}is independent (theorem \ref{Gindependence}) of the geometric structures $(\boldsymbol{j}_{\boldsymbol{\mathrm{g}}},\boldsymbol{h}_{\boldsymbol{\mathrm{g}}},\nabla^{\boldsymbol{\mathrm{g}}})$. 

Finally a disclaimer. The focus of this note is to produce a complete proof of the geometric independence of the curvatura integra without using triangulations, and computations using local charts. Once it is established that the curvatura integra is a topological invariant of a compact and oriented two dimensional manifold (without boundary), there is no surprise in expressing this number in terms of its genus: those manifolds are topologically classified by their genera. 

One might insist in finding an explicit formula relating the curvatura integra and the genus, and there are many possible ways to achieve this. An appendix to this note was added with a proof. Another approach, also discussed in the appendix, is to provide a proof that the first Chern number equals the Euler characteristic, which can be achieved by showing that the top Chern class is related to counting the intersection between the zero section and another section of $T\Sigma$ (via Poincar\'{e}--Hopf index formula).  

%################################################################

\subsection{Organisation}

\hspace{1.5em}Section \ref{secCSRS} contains the construction of a complex structure from any given riemannian structure on an orientable two dimensional real manifold (without boundary), whilst section \ref{secGBF} proves the relationship between the first Chern number and the Gauss--Bonnet formula.

\subsection{Acknowledgements}

\hspace{1.5em}This work was supported by PNPD/CAPES.

%################################################################

\section{Complex structures on Riemann surfaces}\label{secCSRS}

\hspace{1.5em}Assuming $\Sigma$ to be an orientable two dimensional manifold (without \linebreak boundary), any riemannian structure $\boldsymbol{\mathrm{g}}$ allows one to construct an area form $\star_{\boldsymbol{\mathrm{g}}}1\in\Omega^2(\Sigma;\CR)$ and a complex structure $\boldsymbol{j}_{\boldsymbol{\mathrm{g}}}$ compatible with it. The complex structure is actually integrable, for the dimension of $\Sigma$ is two; i.e. $(\Sigma,\boldsymbol{\mathrm{g}},\star_{\boldsymbol{\mathrm{g}}}1,\boldsymbol{j}_{\boldsymbol{\mathrm{g}}})$ is a K\"{a}hler manifold. 

\begin{proposition}\label{ACsurface}
A complex structure $\boldsymbol{j}_{\boldsymbol{\mathrm{g}}}$ can be defined from $\boldsymbol{\mathrm{g}}$ as the solution, for any $X,Y\in\mathfrak{X}(\Sigma;\CR)$, of
\begin{equation}\label{ACsurfaceeq}
\boldsymbol{\mathrm{g}}(\boldsymbol{j}_{\boldsymbol{\mathrm{g}}}(X),Y)=\star_{\boldsymbol{\mathrm{g}}}1(X,Y) \ .
\end{equation} 
\end{proposition}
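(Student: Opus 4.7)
The plan is to establish first that equation \eqref{ACsurfaceeq} actually defines a smooth bundle endomorphism of $T\Sigma$, and then to check that this endomorphism squares to minus the identity.

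For well-definedness I would fix a point $p\in\Sigma$ and a tangent vector $X\in T_p\Sigma$, and observe that the assignment $Y\mapsto \star_{\boldsymbol{\mathrm{g}}}1(X,Y)$ is a linear functional on $T_p\Sigma$. Since $\boldsymbol{\mathrm{g}}$ is non-degenerate, the musical isomorphism produces a unique vector $\boldsymbol{j}_{\boldsymbol{\mathrm{g}}}|_p(X)\in T_p\Sigma$ representing this functional; fibrewise linearity in $X$ is inherited from the bilinearity of $\star_{\boldsymbol{\mathrm{g}}}1$ and $\boldsymbol{\mathrm{g}}$, and smoothness follows because $\boldsymbol{j}_{\boldsymbol{\mathrm{g}}}$ is locally a composition of the smooth tensor $\star_{\boldsymbol{\mathrm{g}}}1$ with the smooth musical isomorphism determined by $\boldsymbol{\mathrm{g}}$.

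To verify $\boldsymbol{j}_{\boldsymbol{\mathrm{g}}}^{\,2}=-\mathrm{id}$ I would argue locally in a positively oriented $\boldsymbol{\mathrm{g}}$-orthonormal frame $\{e_1,e_2\}$, in which, by construction of the area form attached to the orientation and the metric, $\star_{\boldsymbol{\mathrm{g}}}1=e^1\wedge e^2$. Plugging the frame into \eqref{ACsurfaceeq} yields $\boldsymbol{\mathrm{g}}(\boldsymbol{j}_{\boldsymbol{\mathrm{g}}}(e_1),e_1)=0$ and $\boldsymbol{\mathrm{g}}(\boldsymbol{j}_{\boldsymbol{\mathrm{g}}}(e_1),e_2)=1$, forcing $\boldsymbol{j}_{\boldsymbol{\mathrm{g}}}(e_1)=e_2$; analogously $\boldsymbol{j}_{\boldsymbol{\mathrm{g}}}(e_2)=-e_1$. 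Iterating gives $\boldsymbol{j}_{\boldsymbol{\mathrm{g}}}^{\,2}=-\mathrm{id}$ on the frame, and hence on the whole fibre by linearity; since every point admits such a neighbourhood, the identity is global. Equivalently, one can observe intrinsically that the antisymmetry of $\star_{\boldsymbol{\mathrm{g}}}1$ translates into skew-$\boldsymbol{\mathrm{g}}$-symmetry of $\boldsymbol{j}_{\boldsymbol{\mathrm{g}}}$, which in a two dimensional inner product space pins down $\boldsymbol{j}_{\boldsymbol{\mathrm{g}}}$ up to sign; the normalisation $\star_{\boldsymbol{\mathrm{g}}}1(e_1,e_2)=1$ then selects the sign that rotates by a quarter turn in the positive direction.

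I do not foresee a real obstacle: the content is essentially a packaging of the fact that, on a two dimensional oriented inner product space, rotation by $\pi/2$ is the unique skew-symmetric endomorphism of unit determinant compatible with the orientation. The only conceptually delicate point deserving mention is that the globality of $\boldsymbol{j}_{\boldsymbol{\mathrm{g}}}$ rests on the globality of $\star_{\boldsymbol{\mathrm{g}}}1$, and therefore on the orientability hypothesis on $\Sigma$; without it the sign of $\boldsymbol{j}_{\boldsymbol{\mathrm{g}}}$ could not be fixed coherently. Integrability, which is the extra ingredient needed to treat $(\Sigma,\boldsymbol{j}_{\boldsymbol{\mathrm{g}}})$ as a Riemann surface, is automatic in real dimension two and therefore does not enter the proof of the proposition itself.
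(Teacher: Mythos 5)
Your proof is correct, but it reaches the key identity by a genuinely different route from the paper's. The first half (existence, $C^\infty$-linearity, smoothness of $\boldsymbol{j}_{\boldsymbol{\mathrm{g}}}$) is essentially the same in both: you phrase it pointwise via the musical isomorphism of $\boldsymbol{\mathrm{g}}$, the paper via bilinearity of both tensors and uniqueness of the solution of the defining equation; both rest on non-degeneracy. The divergence is in proving $\boldsymbol{j}_{\boldsymbol{\mathrm{g}}}\circ\boldsymbol{j}_{\boldsymbol{\mathrm{g}}}=-\mathrm{id}$. You compute directly in a positively oriented $\boldsymbol{\mathrm{g}}$-orthonormal local frame, where $\star_{\boldsymbol{\mathrm{g}}}1=e^1\wedge e^2$ forces $\boldsymbol{j}_{\boldsymbol{\mathrm{g}}}(e_1)=e_2$ and $\boldsymbol{j}_{\boldsymbol{\mathrm{g}}}(e_2)=-e_1$, and the pointwise identity then holds globally since every point admits such a frame. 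The paper instead stays coordinate-free: it derives the skew-$\boldsymbol{\mathrm{g}}$-symmetry of $\boldsymbol{j}_{\boldsymbol{\mathrm{g}}}$ from the antisymmetry of $\star_{\boldsymbol{\mathrm{g}}}1$, proves that $\boldsymbol{j}_{\boldsymbol{\mathrm{g}}}$ is a $\boldsymbol{\mathrm{g}}$-isometry using the Lagrange-type identity $\star_{\boldsymbol{\mathrm{g}}}1(X,Y)^2=\boldsymbol{\mathrm{g}}(X,X)\cdot\boldsymbol{\mathrm{g}}(Y,Y)-\boldsymbol{\mathrm{g}}(X,Y)^2$, and combines the two to get $\boldsymbol{\mathrm{g}}(\boldsymbol{j}_{\boldsymbol{\mathrm{g}}}\circ\boldsymbol{j}_{\boldsymbol{\mathrm{g}}}(X),Y)=-\boldsymbol{\mathrm{g}}(X,Y)$; this is close to the intrinsic alternative you only sketch at the end. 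Your frame computation is shorter and makes the quarter-turn picture explicit (it is essentially a rigorous version of the matrix remark ${\boldsymbol{j}_{\boldsymbol{\mathrm{g}}}}_{2\times 2}=-{{\boldsymbol{\mathrm{g}}}_{2\times 2}}^{-1}\cdot{\star_{\boldsymbol{\mathrm{g}}}1}_{2\times 2}$ that the paper offers informally before its formal proof); the paper's argument buys the compatibility properties, skewsymmetry and isometry, as explicit intermediate statements, which are exactly what one wants in order to call $\boldsymbol{j}_{\boldsymbol{\mathrm{g}}}$ a compatible complex structure and to build the hermitian inner product later. Your closing remark that orientability is what fixes the sign of $\boldsymbol{j}_{\boldsymbol{\mathrm{g}}}$ coherently is a good observation that the paper leaves implicit.
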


A simple way to prove this statement is to regard the matrices of $\boldsymbol{j}_{\boldsymbol{\mathrm{g}}}$, $\star_{\boldsymbol{\mathrm{g}}}1$, and $\boldsymbol{\mathrm{g}}$ in a local chart. Denoting these matrices by ${\boldsymbol{j}_{\boldsymbol{\mathrm{g}}}}_{2\times 2}$, ${\star_{\boldsymbol{\mathrm{g}}}1}_{2\times 2}$, and ${\boldsymbol{\mathrm{g}}}_{2\times 2}$, one has ${\boldsymbol{j}_{\boldsymbol{\mathrm{g}}}}_{2\times 2}=-{{\boldsymbol{\mathrm{g}}}_{2\times 2}}^{-1}\cdot{\star_{\boldsymbol{\mathrm{g}}}1}_{2\times 2}$. 

Such a complex structure is not uniquely defined by the riemannian structure, since, for every positive function $f\in C^\infty(\Sigma;\CR)$, the complex structures induced by $\boldsymbol{\mathrm{g}}$ and $f\boldsymbol{\mathrm{g}}$ coincide, $\boldsymbol{j}_{\boldsymbol{\mathrm{g}}}=\boldsymbol{j}_{f\boldsymbol{\mathrm{g}}}$. In particular, using local coordinate functions in which the riemannian structure is expressed as a positive multiple of the flat euclidian one, the induced complex structure acts on tangent vectors exactly as the multiplication by $\sqrt{-1}$ does on the real plane. These coordinates are commonly known as isothermal coordinates, but in the context of the complex geometry of Riemann surfaces, they are the holomorphic coordinates. 

\begin{proof}[\hspace{1.5em}Proof of proposition \ref{ACsurface}] 
A solution of equation \eqref{ACsurfaceeq} defines a $C^\infty(\Sigma;\CR)$-linear mapping because, if $X,Y,Z\in\mathfrak{X}(\Sigma;\CR)$ and $f\in C^\infty(\Sigma;\CR)$, 
\begin{align*}
\star_{\boldsymbol{\mathrm{g}}}1(X+fY,Z)&=\star_{\boldsymbol{\mathrm{g}}}1(X,Z)
+f\star_{\boldsymbol{\mathrm{g}}}1(Y,Z) \nonumber \\
&=\boldsymbol{\mathrm{g}}(\boldsymbol{j}_{\boldsymbol{\mathrm{g}}}(X),Z)+f\boldsymbol{\mathrm{g}}(\boldsymbol{j}_{\boldsymbol{\mathrm{g}}}(Y),Z) \nonumber \\
&=\boldsymbol{\mathrm{g}}(\boldsymbol{j}_{\boldsymbol{\mathrm{g}}}(X)+f\boldsymbol{j}_{\boldsymbol{\mathrm{g}}}(Y),Z) \ ,
\end{align*}and the uniqueness of the solution of
\begin{equation*}
\boldsymbol{\mathrm{g}}(\boldsymbol{j}_{\boldsymbol{\mathrm{g}}}(X+fY),Z)=\star_{\boldsymbol{\mathrm{g}}}1(X+fY,Z)
\end{equation*}
implies 
\begin{equation*}
\boldsymbol{j}_{\boldsymbol{\mathrm{g}}}(X+fY)=\boldsymbol{j}_{\boldsymbol{\mathrm{g}}}(X)+f\boldsymbol{j}_{\boldsymbol{\mathrm{g}}}(Y) \ .
\end{equation*}The nondegeneracy of both $\boldsymbol{\mathrm{g}}$ and $\star_{\boldsymbol{\mathrm{g}}}1$ guarantees that $\boldsymbol{j}_{\boldsymbol{\mathrm{g}}}(X)=0$ if and only if $X=0$; consequently, $\boldsymbol{j}_{\boldsymbol{\mathrm{g}}}\in\mathrm{Aut}(\mathfrak{X}(\Sigma;\CR))$.

To actually state that $\boldsymbol{j}_{\boldsymbol{\mathrm{g}}}$ is a complex structure, one has to prove that its inverse is $-\boldsymbol{j}_{\boldsymbol{\mathrm{g}}}$; and such property follows from $\boldsymbol{j}_{\boldsymbol{\mathrm{g}}}$ being skewsymmetric and an infinitesimal isometry with respect to $\boldsymbol{\mathrm{g}}$ (the desired features of a compatible complex structure). Indeed, assuming these two properties,
\begin{equation*}
\boldsymbol{\mathrm{g}}(X,Y)=\boldsymbol{\mathrm{g}}(\boldsymbol{j}_{\boldsymbol{\mathrm{g}}}(X),\boldsymbol{j}_{\boldsymbol{\mathrm{g}}}(Y))=\boldsymbol{\mathrm{g}}(-\boldsymbol{j}_{\boldsymbol{\mathrm{g}}}\circ \boldsymbol{j}_{\boldsymbol{\mathrm{g}}}(X),Y)
\end{equation*}yields $\boldsymbol{j}_{\boldsymbol{\mathrm{g}}}\circ \boldsymbol{j}_{\boldsymbol{\mathrm{g}}}(X)=-X$.

The skewsymmetry is inherited from the skewsymmetry of the area form,
\begin{align*}
\boldsymbol{\mathrm{g}}(\boldsymbol{j}_{\boldsymbol{\mathrm{g}}}(X),Y)&=\star_{\boldsymbol{\mathrm{g}}}1(X,Y) \nonumber \\
&=-\star_{\boldsymbol{\mathrm{g}}}1(Y,X) \nonumber \\
&=-\boldsymbol{\mathrm{g}}(\boldsymbol{j}_{\boldsymbol{\mathrm{g}}}(Y),X)=\boldsymbol{\mathrm{g}}(X,-\boldsymbol{j}_{\boldsymbol{\mathrm{g}}}(Y)) \ .
\end{align*}For the infinitesimal isometry property, one needs to use the identity
\begin{equation*}
\star_{\boldsymbol{\mathrm{g}}}1(X,Y)^2=\boldsymbol{\mathrm{g}}(X,X)\cdot\boldsymbol{\mathrm{g}}(Y,Y)-\boldsymbol{\mathrm{g}}(X,Y)^2 \ .
\end{equation*}The reader will recognise it as the square of the areas (with respect to $\boldsymbol{\mathrm{g}}$) of the parallelograms spanned by $X$ and $Y$. Now, on the one hand,
\begin{align*}
\boldsymbol{\mathrm{g}}(\boldsymbol{j}_{\boldsymbol{\mathrm{g}}}(Y),\boldsymbol{j}_{\boldsymbol{\mathrm{g}}}(Y))^2&=\star_{\boldsymbol{\mathrm{g}}}1(Y,\boldsymbol{j}_{\boldsymbol{\mathrm{g}}}(Y))^2 \nonumber \\
&=\boldsymbol{\mathrm{g}}(Y,Y)\cdot\boldsymbol{\mathrm{g}}(\boldsymbol{j}_{\boldsymbol{\mathrm{g}}}(Y),\boldsymbol{j}_{\boldsymbol{\mathrm{g}}}(Y))-\boldsymbol{\mathrm{g}}(Y,\boldsymbol{j}_{\boldsymbol{\mathrm{g}}}(Y))^2 \nonumber \\
&=\boldsymbol{\mathrm{g}}(Y,Y)\cdot\boldsymbol{\mathrm{g}}(\boldsymbol{j}_{\boldsymbol{\mathrm{g}}}(Y),\boldsymbol{j}_{\boldsymbol{\mathrm{g}}}(Y))-\star_{\boldsymbol{\mathrm{g}}}1(Y,Y)^2 \nonumber \\
&=\boldsymbol{\mathrm{g}}(Y,Y)\cdot\boldsymbol{\mathrm{g}}(\boldsymbol{j}_{\boldsymbol{\mathrm{g}}}(Y),\boldsymbol{j}_{\boldsymbol{\mathrm{g}}}(Y))
\end{align*}produces 
\begin{equation*}
\boldsymbol{\mathrm{g}}(\boldsymbol{j}_{\boldsymbol{\mathrm{g}}}(Y),\boldsymbol{j}_{\boldsymbol{\mathrm{g}}}(Y))=\boldsymbol{\mathrm{g}}(Y,Y) \ . 
\end{equation*}On the other hand,
\begin{align*}
\boldsymbol{\mathrm{g}}(\boldsymbol{j}_{\boldsymbol{\mathrm{g}}}(X),\boldsymbol{j}_{\boldsymbol{\mathrm{g}}}(Y))^2&=\star_{\boldsymbol{\mathrm{g}}}1(X,\boldsymbol{j}_{\boldsymbol{\mathrm{g}}}(Y))^2 \nonumber \\
&=\boldsymbol{\mathrm{g}}(X,X)\cdot\boldsymbol{\mathrm{g}}(\boldsymbol{j}_{\boldsymbol{\mathrm{g}}}(Y),\boldsymbol{j}_{\boldsymbol{\mathrm{g}}}(Y))-\boldsymbol{\mathrm{g}}(X,\boldsymbol{j}_{\boldsymbol{\mathrm{g}}}(Y))^2 \nonumber \\
&=\boldsymbol{\mathrm{g}}(X,X)\cdot\boldsymbol{\mathrm{g}}(Y,Y)-\star_{\boldsymbol{\mathrm{g}}}1(Y,X)^2 \nonumber \\
&=\boldsymbol{\mathrm{g}}(X,X)\cdot\boldsymbol{\mathrm{g}}(Y,Y)-\boldsymbol{\mathrm{g}}(Y,Y)\cdot\boldsymbol{\mathrm{g}}(X,X)+\boldsymbol{\mathrm{g}}(Y,X)^2 \nonumber \\
&=\boldsymbol{\mathrm{g}}(X,Y)^2 \ .
\end{align*}
\end{proof}

%################################################################

\section{Gauss--Bonnet formula}\label{secGBF}

\hspace{1.5em}The tangent bundle $T\Sigma$ together with $\boldsymbol{j}_{\boldsymbol{\mathrm{g}}}$ can be understood as a complex line bundle. Endowed with the hermitian inner product 
\begin{equation*}
\boldsymbol{h}_{\boldsymbol{\mathrm{g}}}:=\boldsymbol{\mathrm{g}}+\sqrt{-1}\cdot\star_{\boldsymbol{\mathrm{g}}}1 \ , 
\end{equation*}and regarding the Levi-Civita connexion $\nabla^{\boldsymbol{\mathrm{g}}}$ as a hermitian connexion\footnote{Since $\nabla^{\boldsymbol{\mathrm{g}}}$ is torsionless, this property is equivalent to the integrability of the compatible complex structure $\boldsymbol{j}_{\boldsymbol{\mathrm{g}}}$.}, \linebreak $(T\Sigma,\boldsymbol{j}_{\boldsymbol{\mathrm{g}}},\boldsymbol{h}_{\boldsymbol{\mathrm{g}}},\nabla^{\boldsymbol{\mathrm{g}}})$ is a hermitian line bundle with a hermitian connexion. This implies that the $2$-form $\sqrt{-1}\cdot \mathrm{curv}(\nabla^{\boldsymbol{\mathrm{g}}})$ represents an integral de Rham cohomology class: if $\Sigma$ is compact, 
\begin{equation*}
\frac{1}{2\pi}\int_\Sigma\sqrt{-1}\cdot \mathrm{curv}(\nabla^{\boldsymbol{\mathrm{g}}})\in\CZ
\end{equation*}and the integer (the Chern number of the line bundle) is a topological invariant.

\begin{lemma}\label{GBformula}
Let $(\Sigma,\boldsymbol{\mathrm{g}})$ be an orientable two dimensional riemannian manifold (without boundary), $\star_{\boldsymbol{\mathrm{g}}}1\in\Omega^2(\Sigma;\CR)$ its area form, and $K_{\boldsymbol{\mathrm{g}}}\in C^\infty(\Sigma;\CR)$ its sectional curvature. Considering the hermitian line bundle with hermitian connexion $(T\Sigma,\boldsymbol{j}_{\boldsymbol{\mathrm{g}}},\boldsymbol{h}_{\boldsymbol{\mathrm{g}}},\nabla^{\boldsymbol{\mathrm{g}}})$, 
\begin{equation*}
\sqrt{-1}\cdot \mathrm{curv}(\nabla^{\boldsymbol{\mathrm{g}}})=K_{\boldsymbol{\mathrm{g}}}\star_{\boldsymbol{\mathrm{g}}}1 \ .
\end{equation*}
\end{lemma}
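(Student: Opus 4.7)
The plan is to compute both sides locally in a positively oriented $\boldsymbol{\mathrm{g}}$-orthonormal frame and verify they agree pointwise. On any sufficiently small open $U\subset\Sigma$, pick a local unit vector field $e_1$ and set $e_2=\boldsymbol{j}_{\boldsymbol{\mathrm{g}}}(e_1)$; by proposition \ref{ACsurface}, the frame $\{e_1,e_2\}$ is positively oriented and $\boldsymbol{\mathrm{g}}$-orthonormal. Because the hermitian product is $\boldsymbol{\mathrm{g}}+\sqrt{-1}\cdot\star_{\boldsymbol{\mathrm{g}}}1$ and $\star_{\boldsymbol{\mathrm{g}}}1$ is alternating, $\escalar{e_1}{e_1}=1$, so $e_1$ also serves as a local unit section of the complex line bundle, providing the gauge in which the hermitian connexion $1$-form becomes directly computable.

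Second, I would express $\nabla^{\boldsymbol{\mathrm{g}}}$ in this frame. Metric compatibility forces skewsymmetry of the connexion matrix, so there exists a real $1$-form $\omega\in\Omega^1(U;\CR)$ with $\nabla^{\boldsymbol{\mathrm{g}}}e_1=\omega\otimes e_2$ and $\nabla^{\boldsymbol{\mathrm{g}}}e_2=-\omega\otimes e_1$. Under the complex line bundle structure, in which $\boldsymbol{j}_{\boldsymbol{\mathrm{g}}}(e_1)=\sqrt{-1}\cdot e_1$, the first equation becomes $\nabla^{\boldsymbol{\mathrm{g}}}e_1=\sqrt{-1}\omega\otimes e_1$, displaying the purely imaginary connexion $1$-form one expects from a hermitian connexion on a complex line bundle. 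The Leibniz rule then produces $curv(\nabla^{\boldsymbol{\mathrm{g}}})\otimes e_1=\nabla^{\boldsymbol{\mathrm{g}}}\nabla^{\boldsymbol{\mathrm{g}}}e_1=\sqrt{-1}\,d\omega\otimes e_1$, because the would-be quadratic correction $(\sqrt{-1}\omega)\wedge(\sqrt{-1}\omega)$ vanishes by skewsymmetry of the wedge product on scalar-valued $1$-forms; hence $\sqrt{-1}\cdot curv(\nabla^{\boldsymbol{\mathrm{g}}})=-d\omega$ on $U$.

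The final step is to identify $-d\omega$ with $K_{\boldsymbol{\mathrm{g}}}\star_{\boldsymbol{\mathrm{g}}}1$. Expanding $R(X,Y)e_1=\nabla^{\boldsymbol{\mathrm{g}}}_X\nabla^{\boldsymbol{\mathrm{g}}}_Y e_1-\nabla^{\boldsymbol{\mathrm{g}}}_Y\nabla^{\boldsymbol{\mathrm{g}}}_X e_1-\nabla^{\boldsymbol{\mathrm{g}}}_{[X,Y]}e_1$ using the previous formulas gives $R(X,Y)e_1=d\omega(X,Y)\cdot e_2$, with the cross terms involving $\omega(X)\omega(Y)$ cancelling. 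Then the sectional curvature identity $K_{\boldsymbol{\mathrm{g}}}=\boldsymbol{\mathrm{g}}(R(e_1,e_2)e_2,e_1)=-\boldsymbol{\mathrm{g}}(R(e_1,e_2)e_1,e_2)=-d\omega(e_1,e_2)$, together with $\star_{\boldsymbol{\mathrm{g}}}1(e_1,e_2)=1$, yields $d\omega=-K_{\boldsymbol{\mathrm{g}}}\star_{\boldsymbol{\mathrm{g}}}1$, so $\sqrt{-1}\cdot curv(\nabla^{\boldsymbol{\mathrm{g}}})=K_{\boldsymbol{\mathrm{g}}}\star_{\boldsymbol{\mathrm{g}}}1$ on $U$, and therefore globally. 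The main obstacle is little more than careful bookkeeping of sign conventions between the Riemann tensor, the sectional curvature, and the area form; the conceptual heart of the argument is the identification of the real skewsymmetric Levi-Civita connexion $1$-form in an oriented orthonormal frame with the purely imaginary hermitian connexion $1$-form of the associated complex line bundle, an identification that is enabled precisely by the choice $e_2=\boldsymbol{j}_{\boldsymbol{\mathrm{g}}}(e_1)$.
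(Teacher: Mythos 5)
Your argument is correct, and it reaches the conclusion by a recognisably different route from the paper's. You fix a local positively oriented orthonormal frame $\{e_1,e_2=\boldsymbol{j}_{\boldsymbol{\mathrm{g}}}(e_1)\}$, read off the real connexion $1$-form $\omega$ from metric compatibility, observe that in the complex gauge $e_2=\sqrt{-1}\cdot e_1$ it becomes the purely imaginary form $\sqrt{-1}\,\omega$, and then compute both $curv(\nabla^{\boldsymbol{\mathrm{g}}})=\sqrt{-1}\,\ud\omega$ and $K_{\boldsymbol{\mathrm{g}}}=-\ud\omega(e_1,e_2)$ explicitly --- the classical moving-frame (Cartan structure equation) derivation, transplanted into the line-bundle language. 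The paper instead argues invariantly: since the connexion is hermitian, $\sqrt{-1}\cdot curv(\nabla^{\boldsymbol{\mathrm{g}}})$ is a real $2$-form on a surface and hence equals $K\star_{\boldsymbol{\mathrm{g}}}1$ for \emph{some} function $K$; it then identifies $K$ with the sectional curvature for arbitrary (not necessarily orthonormal) $X,Y$ by writing $curv(\nabla^{\boldsymbol{\mathrm{g}}})(X,Y)Y=-K\star_{\boldsymbol{\mathrm{g}}}1(X,Y)\boldsymbol{j}_{\boldsymbol{\mathrm{g}}}(Y)$, pairing with $X$, and invoking the Lagrange-type identity $\star_{\boldsymbol{\mathrm{g}}}1(X,Y)^2=\boldsymbol{\mathrm{g}}(X,X)\boldsymbol{\mathrm{g}}(Y,Y)-\boldsymbol{\mathrm{g}}(X,Y)^2$ established in proposition \ref{ACsurface}. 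Both proofs pivot on the same identification $\sqrt{-1}\cdot Y=\boldsymbol{j}_{\boldsymbol{\mathrm{g}}}(Y)$; what your version buys is an explicit derivation of the fact (merely asserted in the paper) that the Riemann endomorphism $R(X,Y)$ acts on sections as multiplication by the scalar $curv(\nabla^{\boldsymbol{\mathrm{g}}})(X,Y)$, at the cost of a gauge choice and the attendant sign bookkeeping; the paper's version stays frame-free and slightly shorter. Your signs are consistent throughout with the conventions $\ud\omega(X,Y)=X\omega(Y)-Y\omega(X)-\omega([X,Y])$ and $K_{\boldsymbol{\mathrm{g}}}=\boldsymbol{\mathrm{g}}(R(e_1,e_2)e_2,e_1)$, so the computation closes correctly.
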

\begin{proof}
As a $2$-form on a two dimensional manifold, $\sqrt{-1}\cdot \mathrm{curv}(\nabla^{\boldsymbol{\mathrm{g}}})$ must be proportional to $\star_{\boldsymbol{\mathrm{g}}}1$, i.e. there exists some function $K_{\boldsymbol{\mathrm{g}}}\in C^\infty(\Sigma;\CR)$ satisfying 
\begin{equation*}
\sqrt{-1}\cdot \mathrm{curv}(\nabla^{\boldsymbol{\mathrm{g}}})=K_{\boldsymbol{\mathrm{g}}}\star_{\boldsymbol{\mathrm{g}}}1 \ .
\end{equation*}

The next step is to prove that such a function $K_{\boldsymbol{\mathrm{g}}}$ is actually the sectional curvature. In order to do so, it is important to remark how a vector field $Y\in\mathfrak{X}(\Sigma;\CR)$ can be multiplied by a complex number, i.e. 
\begin{equation*}
\sqrt{-1}\cdot Y:=\boldsymbol{j}_{\boldsymbol{\mathrm{g}}}(Y) \ .
\end{equation*}Accordingly, for any vector fields $X,Y\in\mathfrak{X}(\Sigma;\CR)$ linearly independent at some $p\in\Sigma$, 
\begin{equation*}
\mathrm{curv}(\nabla^{\boldsymbol{\mathrm{g}}})(X,Y)Y=-K_{\boldsymbol{\mathrm{g}}}\star_{\boldsymbol{\mathrm{g}}}1(X,Y)\boldsymbol{j}_{\boldsymbol{\mathrm{g}}}(Y) \ , 
\end{equation*}and
\begin{equation*}
\boldsymbol{\mathrm{g}}(\mathrm{curv}(\nabla^{\boldsymbol{\mathrm{g}}})(X,Y)Y,X)=-K_{\boldsymbol{\mathrm{g}}}\star_{\boldsymbol{\mathrm{g}}}1(X,Y)\boldsymbol{\mathrm{g}}(\boldsymbol{j}_{\boldsymbol{\mathrm{g}}}(Y),X)=K_{\boldsymbol{\mathrm{g}}}\star_{\boldsymbol{\mathrm{g}}}1(X,Y)^2 \ ;
\end{equation*}thus,
\begin{equation*}
K_{\boldsymbol{\mathrm{g}}}(p)=\frac{\boldsymbol{\mathrm{g}}|_p(\mathrm{curv}(\nabla^{\boldsymbol{\mathrm{g}}})|_p(X|_p,Y|_p)Y|_p,X|_p)}{\boldsymbol{\mathrm{g}}|_p(X|_p,X|_p)\cdot\boldsymbol{\mathrm{g}}|_p(Y|_p,Y|_p)-\boldsymbol{\mathrm{g}}|_p(X|_p,Y|_p)^2} \ .
\end{equation*}
\end{proof} 

One might wonder how different choices of riemannian structures affect the hermitian structures $(\boldsymbol{j}_{\boldsymbol{\mathrm{g}}},\boldsymbol{h}_{\boldsymbol{\mathrm{g}}},\nabla^{\boldsymbol{\mathrm{g}}})$ introduced on $T\Sigma$. 

\begin{proposition}\label{propACGI}
If $\boldsymbol{j}_{\boldsymbol{\mathrm{g}}}$ and $\boldsymbol{j}_{\boldsymbol{\mathrm{g}}^\prime}$ are two complex structures on an orientable two dimensional manifold (without boundary) $\Sigma$ induced by two distinct riemannian structures $\boldsymbol{\mathrm{g}}$ and $\boldsymbol{\mathrm{g}}^\prime$, then $(T\Sigma,\boldsymbol{j}_{\boldsymbol{\mathrm{g}}})$ is diffeomorphic to $(T\Sigma,\boldsymbol{j}_{\boldsymbol{\mathrm{g}}^\prime})$ as complex vector bundles.
\end{proposition}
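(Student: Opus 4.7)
The plan is to construct an explicit smooth bundle map $\Phi\colon T\Sigma\to T\Sigma$, real-linear on each fibre, that intertwines the two complex structures in the sense that $\boldsymbol{j}_{\boldsymbol{\mathrm{g}}}\circ\Phi=\Phi\circ\boldsymbol{j}_{\boldsymbol{\mathrm{g}}'}$. Such a $\Phi$ is by definition a morphism of complex vector bundles from $(T\Sigma,\boldsymbol{j}_{\boldsymbol{\mathrm{g}}'})$ to $(T\Sigma,\boldsymbol{j}_{\boldsymbol{\mathrm{g}}})$, and once fibrewise invertibility is established it will be the desired isomorphism (smoothness of $\Phi^{-1}$ being automatic from smoothness and invertibility of $\Phi$).

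The natural candidate, inspired by the standard algebraic trick for comparing two compatible complex structures on a single real vector space, is
\[
\Phi:=\tfrac{1}{2}\bigl(\mathrm{id}_{T\Sigma}-\boldsymbol{j}_{\boldsymbol{\mathrm{g}}}\circ\boldsymbol{j}_{\boldsymbol{\mathrm{g}}'}\bigr)\ .
\]
Verifying the intertwining property should be a one-line computation: using $\boldsymbol{j}_{\boldsymbol{\mathrm{g}}}^2=-\mathrm{id}$ and $\boldsymbol{j}_{\boldsymbol{\mathrm{g}}'}^2=-\mathrm{id}$, both $\boldsymbol{j}_{\boldsymbol{\mathrm{g}}}\circ\Phi$ and $\Phi\circ\boldsymbol{j}_{\boldsymbol{\mathrm{g}}'}$ collapse to $\tfrac{1}{2}(\boldsymbol{j}_{\boldsymbol{\mathrm{g}}}+\boldsymbol{j}_{\boldsymbol{\mathrm{g}}'})$.

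The main obstacle will be showing that $\Phi|_p$ is invertible for every $p\in\Sigma$, and this is where the common orientation --- implicit in the fact that both $\boldsymbol{j}_{\boldsymbol{\mathrm{g}}}$ and $\boldsymbol{j}_{\boldsymbol{\mathrm{g}}'}$ arise from equation \eqref{ACsurfaceeq} applied to area forms compatible with the \emph{same} orientation of $\Sigma$ --- enters decisively. I would argue pointwise: pick a basis of $T_p\Sigma$ in which $\boldsymbol{j}_{\boldsymbol{\mathrm{g}}}|_p$ has the standard matrix $\left(\begin{smallmatrix}0 & -1\\ 1 & 0\end{smallmatrix}\right)$; the condition $\boldsymbol{j}_{\boldsymbol{\mathrm{g}}'}^2=-\mathrm{id}$ forces the matrix of $\boldsymbol{j}_{\boldsymbol{\mathrm{g}}'}|_p$ into the form $\left(\begin{smallmatrix}a & b\\ c & -a\end{smallmatrix}\right)$ with $a^2+bc=-1$, and orientation-compatibility of $\boldsymbol{j}_{\boldsymbol{\mathrm{g}}'}|_p$ with the chosen basis forces $c>0$, whence $bc\le -1$ gives $b<0$. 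A direct determinant computation then yields $\det\bigl(\mathrm{id}-\boldsymbol{j}_{\boldsymbol{\mathrm{g}}}|_p\circ\boldsymbol{j}_{\boldsymbol{\mathrm{g}}'}|_p\bigr)=2+c-b>0$, so $\Phi|_p$ is a linear isomorphism.

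A purely topological alternative would be to interpolate linearly by $\boldsymbol{\mathrm{g}}_t:=(1-t)\boldsymbol{\mathrm{g}}+t\boldsymbol{\mathrm{g}}'$ (each $\boldsymbol{\mathrm{g}}_t$ is Riemannian because positive-definiteness is a convex condition), produce a smooth family of compatible complex structures $\boldsymbol{j}_{\boldsymbol{\mathrm{g}}_t}$ via proposition \ref{ACsurface}, and invoke homotopy invariance of complex vector bundle isomorphism classes over a paracompact base. I prefer the explicit formula because it keeps the argument self-contained and avoids any compactness hypothesis on $\Sigma$.
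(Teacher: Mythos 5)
Your proposal is correct, and it takes a genuinely different --- and in fact more careful --- route than the paper. The paper's entire proof consists of exhibiting the map $v\mapsto -\boldsymbol{j}_{\boldsymbol{\mathrm{g}}'}|_p\circ\boldsymbol{j}_{\boldsymbol{\mathrm{g}}}|_p(v)$ and asserting that it is a complex vector bundle isomorphism; but a direct check shows that this map $\Psi$ satisfies $\Psi\circ\boldsymbol{j}_{\boldsymbol{\mathrm{g}}}=\boldsymbol{j}_{\boldsymbol{\mathrm{g}}'}$ while $\boldsymbol{j}_{\boldsymbol{\mathrm{g}}'}\circ\Psi=\boldsymbol{j}_{\boldsymbol{\mathrm{g}}}$, so it intertwines the two structures only when they already coincide: as written it is not complex-linear in either direction. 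Your symmetrised candidate $\Phi=\tfrac{1}{2}(\mathrm{id}-\boldsymbol{j}_{\boldsymbol{\mathrm{g}}}\circ\boldsymbol{j}_{\boldsymbol{\mathrm{g}}'})$ --- the complex-linear part of the identity viewed as a real-linear map from $(T\Sigma,\boldsymbol{j}_{\boldsymbol{\mathrm{g}}'})$ to $(T\Sigma,\boldsymbol{j}_{\boldsymbol{\mathrm{g}}})$ --- does satisfy $\boldsymbol{j}_{\boldsymbol{\mathrm{g}}}\circ\Phi=\Phi\circ\boldsymbol{j}_{\boldsymbol{\mathrm{g}}'}=\tfrac{1}{2}(\boldsymbol{j}_{\boldsymbol{\mathrm{g}}}+\boldsymbol{j}_{\boldsymbol{\mathrm{g}}'})$, and your pointwise computation $\det\bigl(\mathrm{id}-\boldsymbol{j}_{\boldsymbol{\mathrm{g}}}|_p\circ\boldsymbol{j}_{\boldsymbol{\mathrm{g}}'}|_p\bigr)=2+c-b>0$ supplies exactly the invertibility argument the paper omits; I verified both. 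You are also right to flag that the common orientation is the decisive hypothesis: without it (e.g.\ $\boldsymbol{j}_{\boldsymbol{\mathrm{g}}'}=-\boldsymbol{j}_{\boldsymbol{\mathrm{g}}}$) the map $\Phi$ vanishes and the statement is false, since the two bundles would be conjugate rather than isomorphic. The only detail to make explicit in a final write-up is that the basis in which you normalise $\boldsymbol{j}_{\boldsymbol{\mathrm{g}}}|_p$ to $\left(\begin{smallmatrix}0&-1\\1&0\end{smallmatrix}\right)$ must be taken positively oriented (e.g.\ $\{v,\boldsymbol{j}_{\boldsymbol{\mathrm{g}}}|_p(v)\}$), so that ``$c>0$'' is indeed what compatibility of $\boldsymbol{j}_{\boldsymbol{\mathrm{g}}'}$ with the orientation of $\star_{\boldsymbol{\mathrm{g}}'}1$ delivers. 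Your alternative argument by convex interpolation of metrics and homotopy invariance of isomorphism classes of complex vector bundles is also sound (manifolds are paracompact, and no compactness is needed), but the explicit formula is preferable here precisely because it is elementary and self-contained; it is also the natural repair of the paper's own one-line proof.
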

\begin{proof}
At each $p\in\Sigma$, the mapping defined by 
\begin{equation*}
T\Sigma\supset T_p\Sigma\ni v\mapsto -\boldsymbol{j}_{\boldsymbol{\mathrm{g}}^\prime}{|}_p\circ \boldsymbol{j}_{\boldsymbol{\mathrm{g}}}{|}_p(v)\in T_p\Sigma\subset T\Sigma
\end{equation*}yields a complex vector bundle diffeomorphism between the bundles $(T\Sigma,\boldsymbol{j}_{\boldsymbol{\mathrm{g}}})$ and $(T\Sigma,\boldsymbol{j}_{\boldsymbol{\mathrm{g}}^\prime})$. 
\end{proof}

This means that one can fix a complex line bundle $L$ to be associated to the tangent bundle of $\Sigma$, and this complex line bundle does not depend on the choice of a riemannian structure. 

\begin{lemma}\label{prop0h}If $(\boldsymbol{h},\nabla)$ and $(\boldsymbol{h}^\prime,\nabla^\prime)$ are two hermitian structures and hermitian connexions on a given complex line bundle $L$ over an orientable two dimensional manifold (without boundary) $\Sigma$, then there exists $\eta\in\Omega^1(\Sigma;\CR)$ such that 
\begin{equation*}
\mathrm{curv}(\nabla)-\mathrm{curv}(\nabla^\prime)=\sqrt{-1}\cdot\ud\eta \ .
\end{equation*} 
\end{lemma}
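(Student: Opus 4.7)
My plan is to reduce the comparison of $curv(\nabla)$ and $curv(\nabla')$ to the case in which both connexions are hermitian with respect to the \emph{same} inner product, and then invoke the standard description of hermitian connexions on a hermitian line bundle as an affine space modelled on $\sqrt{-1}\cdot\Omega^1(\Sigma;\CR)$. First I would observe that any two hermitian inner products on a complex line bundle differ by a strictly positive function: there exists $f\in C^\infty(\Sigma;\CR)$ with $f>0$ such that $\escalar{\cdot}{\cdot}'=f\cdot\escalar{\cdot}{\cdot}$. The bundle automorphism $\phi:L\to L$ given by multiplication by $f^{-1/2}$ is then an isometry from $(L,\escalar{\cdot}{\cdot})$ onto $(L,\escalar{\cdot}{\cdot}')$, and the pulled-back connexion $\widetilde{\nabla}':=\phi^{-1}\circ\nabla'\circ\phi$ is hermitian with respect to $\escalar{\cdot}{\cdot}$. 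A short local computation reveals that $\widetilde{\nabla}'$ differs from $\nabla'$ by an exact $1$-form added to each local connexion $1$-form, so $curv(\widetilde{\nabla}')=curv(\nabla')$ as $2$-forms on $\Sigma$.

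With $\nabla$ and $\widetilde{\nabla}'$ now both hermitian with respect to $\escalar{\cdot}{\cdot}$, I would consider the difference $A:=\nabla-\widetilde{\nabla}'$, a $1$-form on $\Sigma$ with values in $\mathrm{End}(L)$; since $L$ has complex rank one, $A$ is simply multiplication by a $\CC$-valued $1$-form, and the hermiticity condition $\escalar{A(s)}{t}+\escalar{s}{A(t)}=0$ forces its values to be purely imaginary, so $A=\sqrt{-1}\cdot\eta$ for some $\eta\in\Omega^1(\Sigma;\CR)$. Because $A\wedge A$ vanishes on a line bundle,
\begin{equation*}
curv(\nabla)-curv(\widetilde{\nabla}')=\ud A=\sqrt{-1}\cdot\ud\eta \ ;
\end{equation*}
together with $curv(\widetilde{\nabla}')=curv(\nabla')$ this produces the claimed identity.

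I do not anticipate a genuine obstacle: the two essential inputs---the existence of a positive multiplicative factor relating two hermitian metrics on a line bundle, and the affine-space structure of hermitian connexions on a hermitian line bundle---are both standard and can be verified with a direct local computation. The only point meriting a little care is the invariance of the curvature under $\phi$, which holds because $\phi$ is multiplication by a nowhere-vanishing real function and therefore modifies each local connexion $1$-form by an exact piece, leaving its exterior derivative untouched.
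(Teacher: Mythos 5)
Your proof is correct, and while it shares the paper's basic mechanism---the difference of two connexions on a line bundle is a $\CC$-valued $1$-form whose exterior derivative is the difference of the curvatures---it inserts a reduction that the paper does not make: you first conjugate $\nabla'$ by the isometry $\phi=f^{-1/2}$ relating $\escalar{\cdot}{\cdot}$ and $\escalar{\cdot}{\cdot}'=f\cdot\escalar{\cdot}{\cdot}$, so that both connexions become hermitian for the \emph{same} inner product before arguing that their difference is purely imaginary. The paper instead writes $\nabla s=\nabla's+\sqrt{-1}\cdot\eta\otimes s$ directly and asserts in one line that hermiticity of both connexions forces $\eta$ to be real. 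That one-line claim is delicate precisely because the two hermitian structures differ: in a local frame unitary for $\escalar{\cdot}{\cdot}$, the connexion $1$-form of $\nabla'$ has real part $\tfrac{1}{2}\ud\log f$, so the ``raw'' $\eta$ acquires an imaginary part equal to $\tfrac{1}{2}\ud\log f$; this is exact, hence harmless for the curvature identity, but it does have to be discarded. Your gauge-transformation step handles exactly this point---the exact term $-\tfrac{1}{2}\ud\log f$ is absorbed into $\widetilde{\nabla}'$ without changing the curvature---so your argument is, if anything, the more complete of the two. The individual assertions you rely on (two hermitian metrics on a line bundle differ by a positive smooth function, $\phi^{-1}\circ\nabla'\circ\phi$ is $\escalar{\cdot}{\cdot}$-hermitian, and $A\wedge A=0$ on a line bundle so that the curvature difference is just $\ud A$) are all correct and straightforward to verify.
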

\begin{proof} The first thing to be noticed is that $\nabla-\nabla^\prime$ is $C^\infty(\Sigma;\CC)$-linear when it acts on sections of $L$: as, for any $f\in C^\infty(\Sigma;\CC)$ and $s\in\Gamma(L)$,
\begin{align*}
(\nabla-\nabla^\prime)(fs)&=\nabla(fs)-\nabla^\prime(fs) \nonumber \\ 
&=\ud f\otimes s+f\nabla s-\ud f\otimes s-f\nabla^\prime s \nonumber \\ 
&=f(\nabla-\nabla^\prime)s \ .
\end{align*}Therefore, 
\begin{equation*}
(\nabla-\nabla^\prime):\Gamma(L)\to\Omega^1(\Sigma;\CC)\otimes\Gamma(L) 
\end{equation*}is a $C^\infty(\Sigma;\CC)$-linear mapping, and it can be understood as an element of 
\begin{equation*}
\Omega^1(\Sigma;\CC)\otimes\Gamma(L)\otimes\Gamma(L)^*
\end{equation*}which, in turn, is isomorphic to 
\begin{equation*}
\Omega^1(\Sigma;\CC)\otimes\Gamma(L\otimes L^{-1}) \ ;
\end{equation*}however, $L\otimes L^{-1}\cong\CC\times\Sigma$, and $\Gamma(L\otimes L^{-1})\cong C^\infty(\Sigma;\CC)$ allows $\nabla-\nabla^\prime$ to be understood as an element of $\Omega^1(\Sigma;\CC)$. As a result, given any $s\in\Gamma(L)$, there exists $\eta\in\Omega^1(\Sigma;\CC)$ satisfying 
\begin{equation*}
\nabla s =\nabla^\prime s+\sqrt{-1}\cdot\eta\otimes s \ , 
\end{equation*}and, using this expression to compute $\mathrm{curv}(\nabla)$, one obtains
\begin{equation*}
\mathrm{curv}(\nabla)-\mathrm{curv}(\nabla^\prime)=\sqrt{-1}\cdot\ud\eta \ .
\end{equation*}The fact that both connexions are hermitian guarantees that $\eta\in\Omega^1(\Sigma;\CR)$.
\end{proof}

According to Stokes theorem, one has the following.

\begin{theorem}\label{Gindependence} 
Let $(\Sigma,\boldsymbol{\mathrm{g}})$ be an orientable two dimensional compact riemannian manifold (without boundary). If 
$\boldsymbol{\mathrm{g}}^\prime$ is any riemannian structure defined on any manifold $\Sigma^\prime$ diffeomorphic to $\Sigma$, then
\begin{equation*}
\int_\Sigma K_{\boldsymbol{\mathrm{g}}}\star_{\boldsymbol{\mathrm{g}}}1=\int_{\Sigma^\prime}K_{\boldsymbol{\mathrm{g}}^\prime}\star_{\boldsymbol{\mathrm{g}}^\prime}1 \ .
\end{equation*}
\end{theorem}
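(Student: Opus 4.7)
The plan is to reduce the statement to a comparison of two Chern-Weil integrands on the same surface $\Sigma$ and then invoke Stokes' theorem. First I would handle the diffeomorphism: if $\phi\colon\Sigma\to\Sigma'$ is an orientation-preserving diffeomorphism, then $\phi^{*}\boldsymbol{\mathrm{g}}'$ is a riemannian structure on $\Sigma$, its area form is $\phi^{*}(\star_{\boldsymbol{\mathrm{g}}'}1)$ and its sectional curvature is $K_{\boldsymbol{\mathrm{g}}'}\circ\phi$, so the naturality of pullback together with the change-of-variables formula yields
\begin{equation*}
\int_{\Sigma'}K_{\boldsymbol{\mathrm{g}}'}\star_{\boldsymbol{\mathrm{g}}'}1=\int_\Sigma K_{\phi^{*}\boldsymbol{\mathrm{g}}'}\star_{\phi^{*}\boldsymbol{\mathrm{g}}'}1 \ .
\end{equation*}
Thus it suffices to prove that for any two riemannian structures $\boldsymbol{\mathrm{g}}$ and $\boldsymbol{\mathrm{g}}''$ on the same compact boundaryless surface $\Sigma$ the two curvature integrals coincide.

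Next I would translate the problem into a statement about the hermitian line bundle $T\Sigma$. By lemma \ref{GBformula}, $K_{\boldsymbol{\mathrm{g}}}\star_{\boldsymbol{\mathrm{g}}}1=\sqrt{-1}\cdot curv(\nabla^{\boldsymbol{\mathrm{g}}})$ and $K_{\boldsymbol{\mathrm{g}}''}\star_{\boldsymbol{\mathrm{g}}''}1=\sqrt{-1}\cdot curv(\nabla^{\boldsymbol{\mathrm{g}}''})$, so what must be shown is the equality of the Chern-type integrals of the two hermitian connexions $\nabla^{\boldsymbol{\mathrm{g}}}$ and $\nabla^{\boldsymbol{\mathrm{g}}''}$. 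However, $a$ $priori$ these live on the two complex line bundles $(T\Sigma,\boldsymbol{j}_{\boldsymbol{\mathrm{g}}})$ and $(T\Sigma,\boldsymbol{j}_{\boldsymbol{\mathrm{g}}''})$; proposition \ref{propACGI} provides a complex line bundle isomorphism $\Phi\colon(T\Sigma,\boldsymbol{j}_{\boldsymbol{\mathrm{g}}''})\to(T\Sigma,\boldsymbol{j}_{\boldsymbol{\mathrm{g}}})$, and pushing the hermitian inner product $\escalar{\cdot}{\cdot}''$ and the connexion $\nabla^{\boldsymbol{\mathrm{g}}''}$ forward through $\Phi$ produces a second hermitian structure and hermitian connexion on the fixed line bundle $L:=(T\Sigma,\boldsymbol{j}_{\boldsymbol{\mathrm{g}}})$, with the same curvature $2$-form (since $\Phi$ is a bundle isomorphism, curvature is transported intact).

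Now both $\nabla^{\boldsymbol{\mathrm{g}}}$ and $\Phi_{*}\nabla^{\boldsymbol{\mathrm{g}}''}$ are hermitian connexions on $L$ for possibly different hermitian metrics, which is exactly the situation of lemma \ref{prop0h}. That lemma furnishes a real $1$-form $\eta\in\Omega^1(\Sigma;\CR)$ with
\begin{equation*}
curv(\nabla^{\boldsymbol{\mathrm{g}}})-curv(\Phi_{*}\nabla^{\boldsymbol{\mathrm{g}}''})=\sqrt{-1}\cdot\ud\eta \ .
\end{equation*}
Multiplying by $\sqrt{-1}$, integrating over $\Sigma$, and applying Stokes' theorem (legitimate because $\Sigma$ is compact and without boundary) makes the difference vanish, so
\begin{equation*}
\int_\Sigma\sqrt{-1}\cdot curv(\nabla^{\boldsymbol{\mathrm{g}}})=\int_\Sigma\sqrt{-1}\cdot curv(\nabla^{\boldsymbol{\mathrm{g}}''}) \ ,
\end{equation*}
which combined with the first paragraph yields the theorem.

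The only delicate point, and what I expect to be the main obstacle to write cleanly, is the bookkeeping with $\Phi$: one must verify that transporting a hermitian connexion by a complex bundle isomorphism preserves its curvature form and hermiticity, so that lemma \ref{prop0h} can be invoked verbatim. The computations themselves are routine, but the notation must be kept unambiguous to avoid conflating the two distinct complex structures on the same underlying real bundle $T\Sigma$.
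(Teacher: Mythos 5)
Your proposal is correct and follows essentially the same route as the paper: pull everything back to $\Sigma$ via the diffeomorphism, convert the curvature integrands with lemma \ref{GBformula}, identify the two complex line bundles via proposition \ref{propACGI}, and conclude with lemma \ref{prop0h} and Stokes' theorem. You are in fact slightly more explicit than the paper about transporting the hermitian structure and connexion through the bundle isomorphism; your restriction to orientation-preserving $\phi$ is harmless, since in the orientation-reversing case the sign from the change-of-variables formula cancels against the sign in $\phi^{*}(\star_{\boldsymbol{\mathrm{g}}'}1)=-\star_{\phi^{*}\boldsymbol{\mathrm{g}}'}1$.
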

\begin{proof}
Lemmata \ref{GBformula} and \ref{prop0h} grant, for some $\eta\in\Omega^1(\Sigma;\CR)$ and diffeomorphism $\varphi:\Sigma\to\Sigma^\prime$, 
\begin{equation*}
K_{\boldsymbol{\mathrm{g}}}\star_{\boldsymbol{\mathrm{g}}}1-\varphi^*(K_{\boldsymbol{\mathrm{g}}^\prime}\star_{\boldsymbol{\mathrm{g}}^\prime}1)
=\sqrt{-1}\cdot \mathrm{curv}(\nabla^{\boldsymbol{\mathrm{g}}})-\sqrt{-1}\cdot \mathrm{curv}(\varphi^*(\nabla^{\boldsymbol{\mathrm{g}}^\prime}))=-\ud\eta \ . 
\end{equation*}
Subsequently, from Stokes theorem, 
\begin{equation*}
\int_\Sigma\ud\eta=\int_{\partial\Sigma}\eta=\int_\emptyset\eta=0 \ ; 
\end{equation*}hence,
\begin{equation*}
0=\int_\Sigma K_{\boldsymbol{\mathrm{g}}}\star_{\boldsymbol{\mathrm{g}}}1-\int_\Sigma\varphi^*(K_{\boldsymbol{\mathrm{g}}^\prime}\star_{\boldsymbol{\mathrm{g}}^\prime}1)=\int_\Sigma K_{\boldsymbol{\mathrm{g}}}\star_{\boldsymbol{\mathrm{g}}}1-\int_{\Sigma^\prime}K_{\boldsymbol{\mathrm{g}}^\prime}\star_{\boldsymbol{\mathrm{g}}^\prime}1 \ . 
\end{equation*}
\end{proof}

%###############################################################################

%###############################################################################

\newpage

\appendix

%##############################################

\section{Curvatura integra and topological invariants}

\hspace{1.5em}Since an orientable compact two dimensional manifold (without boundary) $\Sigma$ endowed with a riemannian structure $\boldsymbol{\mathrm{g}}$ admits\footnote{An example is provided by the gradient, with respect to the riemannian structure, of a Morse function.} a vector field $X\in\mathfrak{X}(\Sigma;\CR)$ which the points where it vanishes $0_X:=\{p\in\Sigma \ ; \ X|_p=0\}$ form a discrete subset, the hermitian line bundle $(T\Sigma,\boldsymbol{j}_{\boldsymbol{\mathrm{g}}},\boldsymbol{h}_{\boldsymbol{\mathrm{g}}})$ admits a unitary section $s\in\Gamma(T(\Sigma-0_X))$, defined by 
\begin{equation*}
s:=\frac{X}{\sqrt{\boldsymbol{\mathrm{g}}(X,X)}} \ ,
\end{equation*}and a differential form $\Theta\in\Omega^1(\Sigma-0_X;\CR)$ satisfying
\begin{equation*}
\nabla^{\boldsymbol{\mathrm{g}}}s=-\sqrt{-1}\cdot\Theta\otimes s 
\end{equation*}and
\begin{equation*}
\sqrt{-1}\cdot\mathrm{curv}(\nabla^{\boldsymbol{\mathrm{g}}})=\ud\Theta \ . 
\end{equation*}

If $B\subset\Sigma$ is the disjoint union of closed disks, with respect to the riemannian structure, each of them containing only one of the points of $0_X$, then lemma \ref{GBformula} and Stokes theorem provide
\begin{equation*}
\int_\Sigma K_{\boldsymbol{\mathrm{g}}}\star_{\boldsymbol{\mathrm{g}}}1=\int_{\Sigma-B}\ud\Theta+\int_B K_{\boldsymbol{\mathrm{g}}}\star_{\boldsymbol{\mathrm{g}}}1=-\int_{\partial B}\Theta+\int_B K_{\boldsymbol{\mathrm{g}}}\star_{\boldsymbol{\mathrm{g}}}1\ .
\end{equation*}Adjusting the radii of the closed disks forming $B$, they can be taken sufficiently small to be included in a subset $A\subset\Sigma$ composed of disjoint open disks where the hermitian line bundle $(T\Sigma,\boldsymbol{j}_{\boldsymbol{\mathrm{g}}},\boldsymbol{h}_{\boldsymbol{\mathrm{g}}})$ is diffeomorphic (as a vector bundle) to the trivial bundle; hence, there exists a local vector field $X^\prime\in\mathfrak{X}(A;\CR)$ such that $\boldsymbol{\mathrm{g}}(X^\prime,X^\prime)=1$ (it defines a local unitary section trivialising the hermitian line bundle and, at each $q\in A$, the pair $X^\prime$ and $\boldsymbol{j}_{\boldsymbol{\mathrm{g}}}(X^\prime)$ forms a basis for $T_q\Sigma$). With respect to this vector field, the Levi-Civita connexion (understood as a hermitian connexion) can be represented by a differential form $\Theta^\prime\in\Omega^1(A;\CR)$, 
\begin{equation*}
\nabla^{\boldsymbol{\mathrm{g}}}X^\prime=-\sqrt{-1}\cdot\Theta^\prime\otimes X^\prime \ ,  
\end{equation*}and
\begin{equation*}
\int_\Sigma K_{\boldsymbol{\mathrm{g}}}\star_{\boldsymbol{\mathrm{g}}}1=-\int_{\partial B}\Theta+\int_B\ud\Theta^\prime=\int_{\partial B}(\Theta^\prime-\Theta) \ .
\end{equation*}

Both $s$ and $X^\prime$ are unitary sections over $A-0_X$, implying that there must exist a function $f\in C^\infty(A-0_X;\CR)$ such that $s=\exp\bigl(\sqrt{-1}\cdot f\bigr)\cdot X^\prime$. Regarding the Levi-Civita connexion,
\begin{equation*}
\nabla^{\boldsymbol{\mathrm{g}}}s=-\sqrt{-1}\cdot\Theta\otimes s=-\sqrt{-1}\cdot\exp\bigl(\sqrt{-1}\cdot f\bigr)\cdot\Theta\otimes X^\prime \ , 
\end{equation*}however 
\begin{align*}
\nabla^{\boldsymbol{\mathrm{g}}}s&=\nabla^{\boldsymbol{\mathrm{g}}}(\exp\bigl(\sqrt{-1}\cdot f\bigr)\cdot X^\prime) \nonumber \\
&=(\ud\exp\bigl(\sqrt{-1}\cdot f\bigr)-\sqrt{-1}\cdot\exp\bigl(\sqrt{-1}\cdot f\bigr)\cdot\Theta^\prime)\otimes X^\prime \nonumber \\
&=\sqrt{-1}\cdot\exp\bigl(\sqrt{-1}\cdot f\bigr)\cdot(\ud f-\Theta^\prime)\otimes X^\prime \ ,
\end{align*}and
\begin{equation*}
\Theta^\prime-\Theta=\ud f \ . 
\end{equation*}The expression 
\begin{equation*}
X=\sqrt{\boldsymbol{\mathrm{g}}(X,X)}\cdot\exp\bigl(\sqrt{-1}\cdot f\bigr)\cdot X^\prime
\end{equation*}is the analogue of the polar representation of complex numbers, and the restriction of the function $f$ to $\partial B$ represents the angles swept by $X$ along the circles of $\partial B$ measured with respect to $\boldsymbol{\mathrm{g}}$ and the basis spanned by $X^\prime$ and $\boldsymbol{j}_{\boldsymbol{\mathrm{g}}}(X^\prime)$; thus, $(\Theta^\prime-\Theta)|_{\partial B}\in\Omega^1(\partial B;\CR)$ is the angle form on the boundary of each disjoint closed disk, and its integral is $2\pi$ times the sum of the indices of the vector field $X$, 
\begin{equation*}
\int_\Sigma K_{\boldsymbol{\mathrm{g}}}\star_{\boldsymbol{\mathrm{g}}}1=2\pi\cdot\Biggl(\textstyle\sum\limits_{p\in 0_X}\mathrm{index}_p(X)\Biggr) \ .
\end{equation*}Theorem \ref{Gindependence}, then, could be invoked to conclude the following result.

\begin{theorem}
The sum of the indices of a vector field depends only on the topology of the orientable compact two dimensional manifold (without boundary) where it is defined. 
\end{theorem}

One might use Poincar\'{e}--Hopf index formula to relate the curvatura integra with the Euler characteristic; another option is to assume the classification of orientable compact two dimensional manifolds (without boundary) by their genera to relate the curvatura integra with the genus. Before establishing that relationship, a lemma regarding the curvatura integra for closed disks will be needed. 

\begin{lemma}\label{Unitydisk} 
On an orientable compact two dimensional manifold (without boundary) $\Sigma$ endowed with a riemannian structure $\boldsymbol{\mathrm{g}}$, for any closed disk $B\subset\Sigma$, 
\begin{equation*}
\int_B K_{\boldsymbol{\mathrm{g}}}\star_{\boldsymbol{\mathrm{g}}}1=2\pi \ .
\end{equation*}
\end{lemma}
\begin{proof}
Taking $A\subset\Sigma$ a contractible open subset containing $B$ where the tangent bundle is diffeomorphic (as a bundle) to the trivial bundle, there exist a unitary vector field $X\in\mathfrak{X}(A;\CR)$ and a differential form $\Theta\in\Omega^1(A;\CR)$ such that
\begin{equation*}
\nabla^{\boldsymbol{\mathrm{g}}}X=-\sqrt{-1}\cdot\Theta\otimes X \ .  
\end{equation*}The function $\Theta(X)$ measures the infinitesimal area generated by the vector field $X$ and its acceleration, as
\begin{equation*}
\Theta=\star_{\boldsymbol{\mathrm{g}}}1(\nabla^{\boldsymbol{\mathrm{g}}}X,X) \ ; 
\end{equation*}which also means that it is the geodesic curvature when restricted to a curve. In fact, for any $Y\in\mathfrak{X}(A;\CR)$,
\begin{align*}
\nabla^{\boldsymbol{\mathrm{g}}}_YX&=-\sqrt{-1}\cdot\Theta(Y)\cdot X=-\Theta(Y)\cdot(\sqrt{-1}\cdot X) \nonumber \\
&=-\Theta(Y)\cdot\boldsymbol{j}_{\boldsymbol{\mathrm{g}}}(X)=\boldsymbol{j}_{\boldsymbol{\mathrm{g}}}(-\Theta(Y)\cdot X) \ , 
\end{align*}
\begin{equation*}
\boldsymbol{j}_{\boldsymbol{\mathrm{g}}}(\nabla^{\boldsymbol{\mathrm{g}}}_YX)
=\boldsymbol{j}_{\boldsymbol{\mathrm{g}}}\circ\boldsymbol{j}_{\boldsymbol{\mathrm{g}}}(-\Theta(Y)\cdot X)=\Theta(Y)\cdot X \ , 
\end{equation*}and
\begin{equation*}
\boldsymbol{\mathrm{g}}(\boldsymbol{j}_{\boldsymbol{\mathrm{g}}}(\nabla^{\boldsymbol{\mathrm{g}}}_YX),X)
=\boldsymbol{\mathrm{g}}(\Theta(Y)\cdot X,X)=\Theta(Y)\cdot\boldsymbol{\mathrm{g}}(X,X)=\Theta(Y) \ . 
\end{equation*} 

Lemma \ref{GBformula} and Stokes theorem yield
\begin{equation*}
\int_BK_{\boldsymbol{\mathrm{g}}}\star_{\boldsymbol{\mathrm{g}}}1=\int_B\ud\Theta=\int_{\partial B}\Theta \ ;
\end{equation*}however, over the circle $\partial B$, the acceleration of its tangent vector,  $\nabla^{\boldsymbol{\mathrm{g}}}_XX$, must equal $\boldsymbol{j}_{\boldsymbol{\mathrm{g}}}(X)$, and the integral is simply the total angle swept by $X$ along $\partial B$, i.e. $2\pi$.
\end{proof}

The local Gauss--Bonnet formula is a generalisation of lemma \ref{Unitydisk} when the closed disk is just homeomorphic to a smooth closed disk, and the boundary has only finite many points where it stops being smooth.

Finally, the formula relating the curvatura integra and the genus of a riemannian surface can be obtained. 

\begin{theorem}
If the genus of an orientable compact two dimensional manifold (without boundary) $\Sigma$ is $\mathscr{G}(\Sigma)$ and $\boldsymbol{\mathrm{g}}$ is any riemannian structure, then 
\begin{equation*}
\frac{1}{2\pi}\int_\Sigma K_{\boldsymbol{\mathrm{g}}}\star_{\boldsymbol{\mathrm{g}}}1=2-2\cdot\mathscr{G}(\Sigma) \ . 
\end{equation*}
\end{theorem}
\begin{proof}
If $\mathscr{G}(\Sigma)=0$, by the classification of orientable compact two dimensional manifolds (without boundary), $\Sigma$ is diffeomorphic to the sphere, and theorem \ref{Gindependence} guarantees that the curvatura integra does not depend on the riemannian structure; hence, the formula holds, as a round sphere can be explicitly constructed as a submanifold of the three dimensional euclidean space, and a computation shows that its curvatura integra equals $4\pi$. 

Now, if $\Sigma$ is an orientable compact two dimensional manifold (without boundary) with $\mathscr{G}(\Sigma)=m$, $T^2$ is a torus, $B\subset\Sigma$ and $B^\prime\subset T^2$ are closed disks along which $\Sigma$ and $T^2$ are glued to generate the connected sum $\Sigma\# T^2$, and $\boldsymbol{\mathrm{g}}$ is a riemannian structure on the connected sum, then  
\begin{equation*}
\int_{\Sigma\# T^2}K_{\boldsymbol{\mathrm{g}}}\star_{\boldsymbol{\mathrm{g}}}1=\int_{\Sigma-B}K_{\boldsymbol{\mathrm{g}}}\star_{\boldsymbol{\mathrm{g}}}1+\int_{T^2-B^\prime}K_{\boldsymbol{\mathrm{g}}}\star_{\boldsymbol{\mathrm{g}}}1 \ . 
\end{equation*}Assuming that the formula is valid for genus $m$ (and extending the riemannian structure in an arbitrary fashion to both the whole of $\Sigma$ and $T^2$), 
\begin{equation*}
4\pi(1-m)=\int_\Sigma K_{\boldsymbol{\mathrm{g}}}\star_{\boldsymbol{\mathrm{g}}}1=\int_{\Sigma-B}K_{\boldsymbol{\mathrm{g}}}\star_{\boldsymbol{\mathrm{g}}}1+\int_BK_{\boldsymbol{\mathrm{g}}}\star_{\boldsymbol{\mathrm{g}}}1 \ , 
\end{equation*}and regarding the torus part, a flat torus can be constructed as the quotient of the two dimensional euclidean space by an integral lattice, and its curvatura integra vanishes; thus, theorem \ref{Gindependence} provides 
\begin{equation*}
0=\int_{T^2-B^\prime}K_{\boldsymbol{\mathrm{g}}}\star_{\boldsymbol{\mathrm{g}}}1+\int_{B^\prime}K_{\boldsymbol{\mathrm{g}}}\star_{\boldsymbol{\mathrm{g}}}1 \ . 
\end{equation*}Lemma \ref{Unitydisk}, then, gives
\begin{align*}
\int_{\Sigma\# T^2}K_{\boldsymbol{\mathrm{g}}}\star_{\boldsymbol{\mathrm{g}}}1&=\int_{\Sigma-B}K_{\boldsymbol{\mathrm{g}}}\star_{\boldsymbol{\mathrm{g}}}1+\int_{T^2-B^\prime}K_{\boldsymbol{\mathrm{g}}}\star_{\boldsymbol{\mathrm{g}}}1 \nonumber \\
&=4\pi(1-m)-\int_BK_{\boldsymbol{\mathrm{g}}}\star_{\boldsymbol{\mathrm{g}}}1-\int_{B^\prime}K_{\boldsymbol{\mathrm{g}}}\star_{\boldsymbol{\mathrm{g}}}1 \nonumber \\
&=4\pi(1-m)-2\pi-2\pi=4\pi(1-(m+1)) \ , 
\end{align*}proving the general formula by induction. 
\end{proof}

%###############################################################################

\end{document}